\let\phi\varphi
\newcommand\tupl[1]{\overline{#1}}
\let\subset\subseteq
\newcommand\AND{\mathrel{\wedge}}
\def\AND{\mathop{\wedge}}
\def\en{{\mathbb N}}
\let\epsilon\varepsilon
\newcommand\algA{{\mathbf A}}
\newcommand\algP{{\mathbf P}}
\theoremstyle{plain}
\newtheorem{theorem}{Theorem}
\newtheorem{lemma}[theorem]{Lemma}
\newtheorem*{lemma*}{Lemma}
\newtheorem{observation}[theorem]{Observation}
\theoremstyle{definition}
\newtheorem{definition}[theorem]{Definition}
\author{Alexandr Kazda}
\email{alex.kazda@gmail.com}
\address{Department of Algebra, Charles University, Sokolovsk\'a 83, 186 75,
Prague, Czechia, ORCID:0000-0002-7338-037X
}
\title[Taylor term does not imply any nontrivial SLEMC]{Taylor term does not imply any nontrivial linear one-equality Maltsev condition}
\thanks{This work was supported by European Research Council under the
European Union's
Seventh Framework Programme (FP7/2007-2013)/ERC grant agreement
no 616160, the PRIMUS/SCI/12 and UNCE/SCI/022 projects
of the Charles University.}
\begin{document}
\maketitle

\begin{abstract}
  It is known that any finite idempotent algebra that satisfies a
  nontrivial Maltsev condition must satisfy the linear
  one-equality Maltsev condition (a variant of the term
  discovered by M.~Siggers and refined by K.~Kearnes, P.~Markovi\'c, and R.~McKenzie):
  \[
    t(r,a,r,e)\approx t(a,r,e,a).
  \]
We show that if we drop the finiteness assumption, the $k$-ary weak near
 unanimity equations imply only trivial linear one-equality
 Maltsev conditions for every $k\geq 3$.
 From this it follows that 
 there is no nontrivial linear one-equality condition that would
  hold in all idempotent algebras having
  Taylor terms.
  
  Miroslav Olšák has recently shown that there is a
  weakest nontrivial strong Maltsev condition for idempotent
  algebras. Olšák has found several such (mutually equivalent)
  conditions consisting of two or more equations.
  Our result shows that Olšák's equation systems can't be compressed
  into just one equation.
\end{abstract}

\section{Introduction}

In this note we show that for every $k\geq 3$ the free algebra
with a $k$-ary weak near unanimity term does not satisfy any
nontrivial linear one-equality Maltsev condition. This is in
contrast to the finite case where having a Taylor term means that
the algebra in question has the Siggers
term~\cite{siggers-original}. The original Siggers term is
equivalent~\cite[Theorem 2.2.]{kearnes-markovic-mckenzie-omit-1}
to the single equation form (the mnemonics for names of variables is due to Ryan O'Donnel):
\[
  t(r,a,r,e)\approx t(a,r,e,a).
\]
Miroslav Olšák has recently shown that having a Taylor term
is a strong Maltsev condition even for infinite (idempotent) algebras~\cite{mirek-omit-1}.
Olšák's shortest condition consists of two linear identities and
it would be natural to ask if one can't do better and use only
one equation. This paper shows that such an improvement is
impossible.

\section{Preliminaries}
An \emph{algebra} $\algA$ consists of a base set $A$ on which
acts a set of \emph{basic operations} of $\algA$. An operation is
a mapping $f\colon A^n\to A$ where $n\in \en$ is the arity of $f$.

The clone of operations of $\algA$ is the smallest set of operations that contains
all basic operations of $\algA$ as well as projections (that is, operations
of the form $\pi_i^k(x_1,\dots,x_k)=x_i$) and is closed under
composing operations. An algebra is
\emph{idempotent} if for any operation $t$ of $\algA$ and any $a\in A$ we have
$t(a,\dots,a)=a$. It is an easy exercise to verify that 
for $\algA$ to be idempotent it suffices that just the basic operations of $\algA$ are
all idempotent.

A \emph{term} of $\algA$ is a (syntactically correct) composition of basic operation symbols of $\algA$
and variables. An  \emph{equational identity}, or \emph{equation}, is a statement of the form ``$u\approx v$'' where $u$ and $v$ are
terms and the symbol ``$\approx$''
stands for ``the left hand side equals the right hand side after any assignment of
members of $A$ to variables.'' An example of an identity is $t(x,\dots,x)\approx x$
which says that the operation $t$ is idempotent.

A \emph{variety} is a class of algebras sharing the same
signature (the same basic operation symbols and arities of basic
operations) that is closed under taking subalgebras, products and
homomorphic images, or equivalently(by Birkhoff's theorem~\cite[Theorem
4.41]{uabook}) a class of algebras defined
by a system of equational identities. The set of all identities
that holds in a variety is called the equational theory of the
variety. 


A \emph{strong Maltsev condition} is a finite list of identities
involving some operation symbols. An algebra
$\algA$ satisfies a strong Maltsev condition $M$ if for each
$k$-ary operation symbol
in $M$ one can choose a $k$-ary operation of $\algA$ so that
when we replace the symbols of $M$ by operations of $\algA$, we
get a system of equations that are all true in $\algA$. A variety satisfies the condition $M$ if all
algebras in the variety satisfy $M$. Since we consider only strong Maltsev
conditions in this paper, we will omit the adjective ``strong'' for brevity.

A Maltsev condition is \emph{trivial} if it is satisfied by the
algebra $\algP$ on two elements $0$ and $1$ whose set of operations consists 
of projections only. An example of a trivial
strong Maltsev condition is $t(t(x,y,z),y,z)\approx t(x,x,z)$; one can satisfy
this condition by choosing $t$ to be the third projection (i.e. $t(a,b,c)=c$
for all $a,b,c$).

A Maltsev condition is called \emph{linear} if its identities don't involve
compositions, i.e. all identities have the form $t(\tupl x)\approx s(\tupl y)$
or $t(\tupl x)\approx z$ or $z\approx r$ where $\tupl x, \tupl y$ are tuples of
variables, $t,s$ are (possibly equal) operation symbols, and $z,r$ are
variables (we include the third case for completeness only; only a trivial
algebra can satisfy an identity of the form $z\approx r$ where $z,r$ are
distinct variables).

Having a \emph{$k$-ary weak near
unanimity operation} ($k$-wnu) for a fixed $k\in \{2,3,4,\dots\}$
is a Maltsev condition that consists of the following $k$ linear identities for the $k$-ary
operation symbol $w$:
\begin{align*}
  w(x,x,x,\dots,x,x)&\approx x\\
  w(y,x,x,\dots,x,x)&\approx  w(x,y,x,\dots,x,x)\approx
  w(x,x,y,\dots,x,x)\approx\cdots\\
  \cdots&\approx w(x,x,x,\dots,y,x) \approx  w(x,x,x,\dots,x,y).
\end{align*}

Having a \emph{Taylor operation (term)} refers to having an operation $t$ satisfying any linear Maltsev
condition of the form
\begin{align*}
  t(x,\dots,x)&\approx x\\
  t(x,?,?,\dots,?)&\approx t(y,?,?,\dots,?)\\
  t(?,x,?,\dots,?)&\approx t(?,y,?,\dots,?)\\
		  &\vdots\\
  t(?,?,?,\dots,x)&\approx t(?,?,?,\dots,y),
\end{align*}
where $x,y$ are variables and the question marks stand for some choice of $x$'s and $y$'s.

It is immediate to see that any operation that is a $k$-wnu is also a Taylor
term (but not the other way around). Finite
idempotent algebras with Taylor terms are well understood as
the following theorem shows.

\begin{theorem}[Combining~\cite{taylor-homotopy}, \cite{maroti-mckenzie-wnu},
  \cite{kearnes-markovic-mckenzie-omit-1},
  and~\cite{barto-kozik-cyclic-terms-and-csp}]\label{thm:finite}
  Let $\algA$ be a finite idempotent algebra. Then the following are
  equivalent:
  \begin{enumerate}
    \item $\algA$ satisfies a nontrivial Maltsev condition,
    \item $\algA$ has a Taylor term,
    \item $\algA$ has a $k$-wnu for some $k\in \en$,
    \item $\algA$ has a $k$-ary cyclic term for some $k\in \en$,
	    where a cyclic term satisfies the equation
	    \[
		    c(x_1,x_2,\dots,x_k)\approx
		    c(x_k,x_1,\dots,x_{k-1}).
	    \]
    \item $\algA$ satisfies the Maltsev condition (known as a Siggers term)
\[
  t(r,a,r,e)\approx t(a,r,e,a)
\]
where $a,e,r$ are variables.
  \end{enumerate}
\end{theorem}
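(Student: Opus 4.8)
The plan is to prove Theorem~\ref{thm:finite} not from scratch but by assembling the four cited results, using the Preliminaries as the glue for the ``easy'' arrows. Several implications are purely syntactic. For $(3)\Rightarrow(2)$, a $k$-wnu term is, by inspection of its two displayed identities, already a linear Taylor term. For $(4)\Rightarrow(3)$, iterating $c(x_1,\dots,x_k)\approx c(x_k,x_1,\dots,x_{k-1})$ shows that a cyclic term (of arity $k\ge 2$) is invariant under every cyclic permutation of its arguments; since the $k$ tuples ``one $y$, the rest $x$'' are cyclic shifts of one another, $c$ satisfies the wnu identities ($c$ being idempotent because the algebra is). For $(2)\Rightarrow(1)$ and $(5)\Rightarrow(1)$ one checks that neither a Taylor system nor the Siggers equation can be realised by a projection on $\algP$: in a Taylor system the $i$-th identity restricted to coordinate $i$ forces $x\approx y$ when $t=\pi_i$, and for the Siggers equation the projections $\pi_1,\dots,\pi_4$ would force $r\approx a$, $a\approx r$, $r\approx e$, and $e\approx a$ respectively. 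Finally $(5)\Rightarrow(2)$ is also syntactic: substituting $x$'s and $y$'s for $a,e,r$ in $t(r,a,r,e)\approx t(a,r,e,a)$ produces, for every coordinate, an identity of the exact shape required of a Taylor term --- for instance $t(y,x,y,x)\approx t(x,y,x,x)$ witnesses coordinates $1$ and $3$ while $t(x,y,x,x)\approx t(y,x,x,y)$ witnesses coordinates $2$ and $4$ --- so a Siggers term is in particular a $4$-ary Taylor term.

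There remain three substantive implications, one per cited paper. First, $(1)\Rightarrow(2)$ is Taylor's theorem~\cite{taylor-homotopy}, valid for every idempotent algebra: a nontrivial (idempotent) Maltsev condition can be compressed into a single Taylor term, the mechanism being that failure of the condition in $\algP$ supplies, coordinate by coordinate, an argument pattern of $x$'s and $y$'s that no projection can fix. Second, $(2)\Rightarrow(3)$ is the Maróti--McKenzie theorem~\cite{maroti-mckenzie-wnu}, which upgrades a Taylor term on a \emph{finite} idempotent algebra to a $k$-wnu term (for some $k$; indeed for all sufficiently large $k$). Third, $(3)\Rightarrow(4)$ is the Barto--Kozik cyclic term theorem~\cite{barto-kozik-cyclic-terms-and-csp}: a finite idempotent algebra with a wnu term has a cyclic term of every prime arity exceeding $|A|$.

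It then only remains to close the cycle with $(2)\Rightarrow(5)$, i.e.\ to extract the four-variable Siggers equation from a Taylor term on a finite idempotent algebra. This is the Kearnes--Markovic--McKenzie refinement~\cite{kearnes-markovic-mckenzie-omit-1} of Siggers' original six-variable term~\cite{siggers-original}, and it can alternatively be reached by first passing, via Barto--Kozik, to a cyclic term of a convenient arity and then specialising and composing it. Collecting everything, the equivalences $(1)\Leftrightarrow(2)$, $(2)\Leftrightarrow(3)$, $(3)\Leftrightarrow(4)$ and $(2)\Leftrightarrow(5)$ together give the theorem.

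I expect the real content --- and the main obstacle --- to sit in the finiteness-dependent steps $(2)\Rightarrow(3)$, $(3)\Rightarrow(4)$ and, downstream of them, $(2)\Rightarrow(5)$: these are exactly the places where the argument cannot be made syntactic, and, as the main result of this paper confirms, they have no analogue over infinite idempotent algebras. The remaining bookkeeping is routine; the only care needed is to orient each easy arrow correctly and to note that the nontriviality of the Taylor and Siggers conditions relies on $\algP$ having two elements, which it does by definition.
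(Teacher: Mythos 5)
Your proposal is correct and matches the paper's approach: the paper gives no proof of Theorem~\ref{thm:finite} beyond attributing it to the four cited works, and your assembly — Taylor for $(1)\Rightarrow(2)$, Mar\'oti--McKenzie for $(2)\Rightarrow(3)$, Barto--Kozik for $(3)\Rightarrow(4)$, Kearnes--Markovic--McKenzie for $(2)\Rightarrow(5)$, plus the routine syntactic arrows and the projection-based nontriviality checks — is exactly the intended combination, and your explicit substitutions (e.g.\ $t(y,x,y,x)\approx t(x,y,x,x)$ and $t(x,y,x,x)\approx t(y,x,x,y)$ covering all four coordinates of the Siggers term) are verifiably correct.
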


Note that the cyclic and Siggers term conditions, unlike the other equivalent
conditions involve only one identity (plus idempotency, which we assume from the start).
We will abbreviate single linear equality Maltsev condition as
\emph{SLEMC}. Siggers term and cyclic term conditions are
examples of nontrivial SLEMCs, while the 3-wnu condition is not a
SLEMC. 

Our work stems from an attempt to generalize Theorem~\ref{thm:finite} to
infinite idempotent algebras. We will show that there is no
analogue of the last two points,
i.e. that having a $k$-wnu for $k\geq 3$ does not imply a nontrivial SLEMC.  
(For $k=2$, we have the SLEMC $w(x,y)\approx w(y,x)$.)

\section{3-wnu implies only trivial SLEMCs}

In this section we show in detail that having a 3-wnu term does not imply any
nontrivial SLEMC. The general case of having a $k$-wnu differs from the 3-wnu
situation only by a
slightly more complicated notation. This is why we first give the proof for
3-wnu and then, in the next section, we sketch the argument for the general
case without going into details.

\begin{theorem}\label{thm:main}
  Let $V$ be the variety of algebras with one ternary basic
  operation $w$ and with the equational theory generated by the 3-wnu identities
  \begin{align*}
    w(x,x,x)&\approx x\\
    w(x,x,y)&\approx w(x,y,x)\approx w(y,x,x).
  \end{align*}
  This variety (which is idempotent and has a 3-wnu term) 
  does not satisfy any nontrivial SLEMC.
\end{theorem}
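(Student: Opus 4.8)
The plan is to argue by contradiction: suppose $V$ satisfies a nontrivial SLEMC, say $s(\tupl u)\approx t(\tupl v)$ where $s,t$ are terms in the language of $V$ (i.e.\ terms built from $w$) and $\tupl u,\tupl v$ are tuples of variables. Because the equation is linear and $V$ is idempotent, I may assume $s$ and $t$ are the \emph{same} term symbol, or handle the general two-symbol case by the usual trick of renaming; the essential content is a single identity $t(\tupl u)\approx t(\tupl v)$ that holds in $V$ but fails in the projection algebra $\algP$. By the free-algebra characterization quoted from \cite{uabook}, the identity holding in $V$ is equivalent to $t(\tupl u)=t(\tupl v)$ in $F_V(X)$, where $X$ is the (finite) set of variables occurring. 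So I need to understand when two $w$-terms are equal in $F_V(X)$, i.e.\ understand the equational theory of 3-wnu.

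The key step is therefore a \textbf{combinatorial description of the free algebra} $F_V(X)$: I want an invariant that is preserved by the 3-wnu identities and that separates the two sides of any candidate nontrivial SLEMC. The natural idea is to assign to each term $t$ over variables $X$ a function — say, for each variable $x\in X$, count (with appropriate bookkeeping) the ``multiplicity'' or ``parity'' with which $x$ appears, in a way invariant under $w(x,x,y)\approx w(x,y,x)\approx w(y,x,x)$ and under $w(x,x,x)\approx x$. For instance one can try mapping into an abelian group or a semilattice: the operation $w$ could be interpreted as $w(a,b,c)=a+b+c$ in $(\zet/2)^X$ with the idempotency forcing something, or as a median/semilattice operation. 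Concretely, I expect the winning move is to realize the 3-wnu freely inside a product of simple algebras — e.g.\ on a two-element set take $w$ to be the minority operation $x+y+z \bmod 2$, which is a 3-wnu, or take $w$ to be a semilattice operation (also a 3-wnu); then combine such interpretations to get enough resolving power. Evaluating the supposed identity $t(\tupl u)\approx t(\tupl v)$ in these concrete 3-wnu algebras, one shows the two sides can be forced apart unless $\tupl u$ and $\tupl v$ are related by a trivial reshuffling (a permutation of equal variables, or repetition collapsing via idempotency) — precisely the condition under which the SLEMC is satisfied by projections, hence trivial.

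The \textbf{main obstacle} I anticipate is pinning down exactly which ``trivial'' identities $t(\tupl u)\approx t(\tupl v)$ the 3-wnu \emph{does} imply, so that the dichotomy ``either the identity is of this trivial shape, or it fails in some explicit 3-wnu algebra'' is clean and exhaustive. This requires choosing the family of test algebras carefully: a single minority algebra sees only $\zet/2$-linear information and will validate some non-projection identities (e.g.\ it satisfies $w(x,y,z)\approx w(z,y,x)$), so it alone is too coarse; a single semilattice is also too coarse in a different direction. The real work is to show that the \emph{combination} of the right test algebras (minority on $\zet/2$, plus semilattices, plus perhaps a few more 3-wnu operations obtained by brute-force search on small domains, or an affine example over $\zet/3$) has exactly the projection clone's worth of linear identities and no more — equivalently, that the clone these generate, restricted to linear identities, coincides with that of $\algP$. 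Once that resolving family is fixed, checking that each proposed SLEMC either reduces to a projection identity or is refuted amounts to a finite case analysis on the shape of $\tupl u$ vs.\ $\tupl v$, which I would carry out by tracking, for each variable, its pattern of occurrences across the two argument tuples.
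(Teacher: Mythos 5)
Your opening reduction---that any nontrivial SLEMC for an idempotent variety with a nontrivial algebra must take the form $t(\tupl u)\approx t(\tupl v)$ with $u_i\neq v_i$ in every coordinate---is correct and matches Observation~\ref{obs:form} of the paper. But the core of your plan, namely refuting every remaining candidate identity by evaluating it in a carefully chosen family of concrete finite 3-wnu algebras (minority on $\zet/2$, semilattices, an affine example over $\zet/3$, further small algebras found by search), cannot succeed, and this impossibility is essentially the point of the theorem. Any finite family of finite 3-wnu algebras has a product $\mathbf{B}$ that is again a \emph{finite} idempotent algebra with a 3-wnu term, so by Theorem~\ref{thm:finite} (Siggers; Kearnes--Markovi\'c--McKenzie) $\mathbf{B}$ has a term $t$ with $t(r,a,r,e)\approx t(a,r,e,a)$. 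That is a nontrivial SLEMC satisfied simultaneously by all your test algebras, so no such family can have ``exactly the projection clone's worth of linear identities.'' The ``main obstacle'' you flag is therefore not something to be overcome by choosing the test algebras more cleverly; it is a provable dead end for any finite collection of finite algebras, and you offer no infinite family or non-finite test algebra in its place. Likewise, numerical invariants such as multiplicities or parities of variable occurrences are exactly the kind of information visible to finite quotients and run into the same wall.

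The paper's proof instead works inside the infinite free algebra itself. It exhibits $F_V(X)$ concretely as an algebra $\algA$ of syntactic normal forms (terms $w(a_1,a_2,a_3)$ with $a_1\neq a_2,a_3$, with $w^\algA$ defined by the obvious case analysis), verifies freeness, and then uses a purely syntactic invariant: the subterm preorder $\preceq$ on normal forms. The key lemma is that
\[
S=\{(a,b)\in A^2\colon a\not\preceq b \text{ and } b\not\preceq a\}
\]
is a subuniverse of $\algA^2$; it contains all pairs of distinct variables and misses the diagonal, whereas a nontrivial SLEMC would force the subalgebra generated by those pairs to meet the diagonal (Observation~\ref{obs:SLEMC}). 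So the separating structure is order-theoretic and lives on the infinite free algebra, not on finite quotients. Your proposal is missing this idea, and without it (or some other genuinely infinite construction) the argument cannot be completed.
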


The proof of this theorem will occupy the rest of this section.
From the equations, we can see that $V$ is idempotent and that $w$ is a 3-wnu
operation, so the only nonobvious statement is that $V$ does not
satisfy any nontrivial SLEMC.

Since $V$ contains algebras on more than one element (for example
$\{0,1\}$ with $w(x,y,z)=x+y+z \pmod 2$), any candidate for a
nontrivial SLEMC has to have a rather specific shape:

\begin{observation}\label{obs:form}
  Let $\algA$ be an idempotent algebra on at least two elements. If $\algA$
  satisfies a nontrivial SLEMC $M$, then $M$ has the form
\[
  t(x_1,\dots,x_m)\approx t(y_1,\dots,y_m)
\]
where $t$ is an operation symbol and $x_1,\dots,x_m,y_1,\dots,y_m$
are variable symbols such that $x_i\neq y_i$ for $i=1,2,\dots, m$.
\end{observation}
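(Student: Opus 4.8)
\bigskip

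The plan is to run through the short list of syntactic shapes that a linear one-equality condition may have, according to the definition, and to rule out every shape other than the one claimed. Two elementary facts do all the work. First, the clone of $\algP$ is exactly the set of projections, so if we can interpret the operation symbols appearing in $M$ as suitable projections in such a way that the identity becomes true in $\algP$, then $M$ is trivial, contrary to hypothesis. Second, $\algA$ is idempotent, meaning \emph{every} operation $f$ of $\algA$ satisfies $f(a,\dots,a)=a$; together with $|A|\geq 2$ this lets us kill the remaining shapes by producing an assignment of elements of $A$ to the variables of $M$ under which no interpretation of the operation symbols can make the two sides of $M$ equal, so that $\algA\not\models M$.

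By the definition of a linear condition, $M$ has one of the forms: (1) $z\approx r$ with $z,r$ variables; (2) $t(x_1,\dots,x_m)\approx z$, up to swapping the two sides; or (3) $t(x_1,\dots,x_m)\approx s(y_1,\dots,y_n)$ with operation symbols $t,s$ (possibly equal). In case (1): if $z=r$ then $M$ reads $z\approx z$ and is true in $\algP$, hence trivial; if $z\neq r$, assigning two distinct elements of $A$ to $z$ and $r$ refutes $M$ in $\algA$. In case (2): if $z$ coincides with some $x_j$, interpreting $t$ as $\pi_j^m$ makes $M$ true in $\algP$, hence trivial; otherwise set $x_1=\dots=x_m=a$ and $z=b$ for $a\neq b$, and idempotency forces the left-hand side to equal $a\neq b$ for \emph{every} interpretation of $t$, so $\algA\not\models M$. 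Either way we contradict one of the two hypotheses on $M$.

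For case (3), first suppose $t$ and $s$ are distinct symbols. If the tuples $(x_1,\dots,x_m)$ and $(y_1,\dots,y_n)$ have a variable in common, say $x_i=y_j$, then interpreting $t$ as $\pi_i^m$ and $s$ as $\pi_j^n$ turns $M$ into $x_i\approx x_i$, true in $\algP$, so $M$ is trivial. If the two tuples are variable-disjoint, assign $a$ to every $x_i$ and $b\neq a$ to every $y_j$; idempotency of any interpretations of $t$ and $s$ forces $a=b$, so $\algA\not\models M$. Both conclusions contradict the hypotheses, so $t$ and $s$ must be the same symbol, whence $m=n$ and $M$ is $t(x_1,\dots,x_m)\approx t(y_1,\dots,y_m)$. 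It remains to see that $x_i\neq y_i$ for every $i$: if $x_i=y_i$ for some $i$, then interpreting $t$ as $\pi_i^m$ makes both sides of $M$ equal to $x_i$ in $\algP$, so $M$ would be trivial. This leaves precisely the asserted form.

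This argument is essentially a finite case analysis, so I do not expect a real obstacle. The one place calling for mild care is the last step of case (3) --- the subcase in which the operation symbol is repeated but one coordinate carries the same variable on both sides: this looks as though it might yield a genuinely new nontrivial condition, but it collapses at once under the projection interpretation. One should also note, in the ``$\algA\not\models M$'' subcases, that the refuting assignment works against every interpretation of the operation symbols simultaneously, because it uses nothing beyond idempotency, which all operations of $\algA$ enjoy.

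\bigskip
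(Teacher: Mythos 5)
Your proof is correct and follows essentially the same approach as the paper: a case analysis over the possible syntactic shapes of a linear one-equality condition, using projection interpretations to witness triviality and idempotency together with $|A|\geq 2$ to refute the remaining shapes. You are somewhat more explicit than the paper in the cases it dismisses with ``in a similar way'' (the $z\approx r$ case and the subcase where $z$ occurs among the $x_j$), but the underlying argument is identical.
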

\begin{proof}
Assume that $\algA$ satisfies a nontrivial SLEMC $M$ of the form
\[
  r(x_1,\dots,x_m)\approx s(y_1,\dots,y_k)
\]
where $r,s$ are two different operation symbols. Since the
condition $M$ is
supposed to be
nontrivial, the variable sets $\{x_1,\dots,x_m\}$ and $\{y_1,\dots,y_k\}$ must
be disjoint (for had we $x_i=y_j$ then we could satisfy $M$ by taking $r$ and $s$ to be the projections
to the $i$-th and $j$-th coordinates, respectively). Therefore, the SLEMC $M$
implies
\[
  r(x,\dots,x)\approx s(y,\dots,y)
\]
where $x, y$ are distinct variable symbols. Since $\algA$ is
idempotent, the operations of $\algA$ realizing $r$ and $s$ are idempotent and
$\algA$ satisfies $x\approx y$, meaning $|A|=1$.

A similar argument rules out the SLEMC 
\[
  t(x_1,\dots,x_m)\approx y.
\]
This leaves only the possibility 
\[
  t(x_1,\dots,x_m)\approx t(y_1,\dots,y_m),
\]
where $x_i\neq y_i$ for all $i$ (were $x_i=y_i$, we could satisfy $M$ by taking
$t$ to be the $m$-ary projection to the $i$-th coordinate).
\end{proof}

We will now construct an algera in $V$ that satisfies no nontrivial SLEMC. Two
comments before we begin: First, we are actually going to construct the free
countably generated algebra in $V$. Second, we note for readers familiar with
term rewrite systems (see eg.~\cite{term-rewriting}) that we are implicitly
studying the term rewrite system with the rules $w(x,x,x)\to x$,
$w(y,x,x),w(x,y,x),w(x,x,y)\to u(x,y)$ where $u(x,y)$ is a new symbol that
stands for $w(y,x,x)$. We opted to not use the machinery of term rewriting
because an elementary argument is reasonably short and prepares us for
calculations with normal forms later on.

Let $X$ be a countable set of variable symbols.
Let $T$ be the set of all possible terms we can get using $X$ and
a single ternary operation symbol $w$ (so for example
$w(y,w(x,y,z),y)\in T$).

We define the set $A$ of ``normal forms'' of terms of $T$ modulo the 3-wnu
identities as follows: A term $t$ lies in $A$ if either $t$ is a variable from
$X$, or $t=w(a_1,a_2,a_3)$ where $a_1,a_2,a_3\in A$ and we have $a_1\neq a_2,
a_3$ (for example, $w(w(x,y,z),y,y)$ lies in $A$, but $w(y,w(x,y,z),y)$ does
not).

Let $t\in T$ be a term. It is easy to prove by induction on the number of
occurrences of $w$ in $t$ that we can use the 3-wnu identitites to rewrite $t$
to a term $t'\in A$ such that $t\approx t'$ in $V$ (in fact, the term 
$t'$ is unique for a given $t$; we omit the proof of this as we will not need it). 

Consider the algebra $\algA=(A,w^\algA)$ with the
operation $w^\algA$ defined as
follows:
\begin{enumerate}
  \item If $a_1,a_2,a_3\in A$ are pairwise different then we let
    $w^\algA(a_1,a_2,a_3)=w(a_1,a_2,a_3)$,
  \item if $a,b\in A$ are different then we let all three of
	  $w^\algA(a,a,b)$, $w^\algA(a,b,a)$, $w^\algA(b,a,a)$ to
	  be equal to $w(b,a,a)$, and
  \item if $a\in A$, then $w^\algA(a,a,a)=a$.
\end{enumerate}

It is easy to verify that $\algA$ is closed under $w^\algA$. Observe also that
the operation $w^\algA$ is a 3-wnu operation, so $\algA\in V$.

Note that in many cases we have $w^\algA(a,b,c)=w(a,b,c)$, but this is not
always true. This is why we distinguish between $w^\algA$ (operation
symbol of $\algA$) and $w$ (formal symbol used to
describe terms of $V$).

Since $\algA\in V$, to prove Theorem~\ref{thm:main} it is
enough to show that $\algA$ satisfies only trivial SLEMCs.
To that end let $R$ be the subalgebra of $\algA^2$ generated by $\{(x,y)\colon x,y\in
X,\, x\neq y\}$. The following observation shows that to prove
Theorem~\ref{thm:main}, it is enough to show that $R$ does not intersect the
diagonal.

\begin{observation}\label{obs:SLEMC}
  If the variety $V$ satisfies a nontrivial SLEMC, then the
  relation $R$ defined above
  intersects the diagonal of $A^2$ (in other words, there is an $r\in A$ such that
  $(r,r)\in R$).
\end{observation}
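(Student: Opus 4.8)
The plan is to combine the normal-form description of $\algA$ as the free $X$-generated algebra in $V$ with Observation~\ref{obs:form}. Assume $V$ satisfies a nontrivial SLEMC. Since $V$ contains an algebra on two elements (the $\{0,1\}$ example with $w$ the mod-$2$ sum) and $\algA$ itself is idempotent with $|A|\geq 2$, Observation~\ref{obs:form} forces the SLEMC to have the shape
\[
  t(x_1,\dots,x_m)\approx t(y_1,\dots,y_m),
\]
for some term $t$ in the language of $V$ and variable symbols with $x_i\neq y_i$ for every $i$.

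Next I would instantiate the variables inside the free algebra. Let $Z$ be the (finite) set of variable symbols occurring in the SLEMC; since $X$ is countably infinite, fix an injection $\iota\colon Z\to X$ and write $a_i=\iota(x_i)$, $b_i=\iota(y_i)$. Because $\iota$ is injective and $x_i\neq y_i$, we get $a_i\neq b_i$ for each $i$, so each pair $(a_i,b_i)$ is one of the generators $\{(x,y):x,y\in X,\ x\neq y\}$ of $R$, and hence $(a_i,b_i)\in R$. Now apply the term operation $t^{\algA}$ coordinatewise: as $R$ is a subalgebra of $\algA^2$ it is closed under all term operations of $\algA^2$, so
\[
  t^{\algA^2}\bigl((a_1,b_1),\dots,(a_m,b_m)\bigr)=\bigl(t^{\algA}(a_1,\dots,a_m),\,t^{\algA}(b_1,\dots,b_m)\bigr)\in R.
\]
Since $\algA\in V$ satisfies the SLEMC, the two coordinates coincide; calling their common value $r$, we conclude $(r,r)\in R$, i.e.\ $R$ meets the diagonal.

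The only real content here is Observation~\ref{obs:form} together with the fact that $\algA$ is the free algebra in $V$ (so that ``$V$ satisfies the SLEMC'' is equivalent to ``$\algA$ satisfies the corresponding identity under all $X$-valued assignments''); everything else is a routine coordinatewise computation. I therefore do not expect a genuine obstacle in this step — the substantive work of the paper will be the other direction, namely proving that $R$ does in fact avoid the diagonal of $A^2$.
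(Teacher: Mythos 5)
Your proof is correct and follows essentially the same route as the paper: invoke Observation~\ref{obs:form} to put the SLEMC in the form $t(x_1,\dots,x_m)\approx t(y_1,\dots,y_m)$ with $x_i\neq y_i$, instantiate the variables as distinct elements of $X$ so that each pair $(x_i,y_i)$ is a generator of $R$, and apply $t$ coordinatewise to land on the diagonal. Your explicit injection $\iota$ just makes precise the paper's ``without loss of generality taken from the set $X$'' step.
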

\begin{proof}
  Assume that $V$ satisfies the SLEMC
  \[
    t(y_1,\dots,y_m)\approx t(z_1,\dots,z_m)
  \]
  where $y_1,\dots,y_m,z_1,\dots,z_m$ are variables (without loss
  of generality) taken from the set $X$. Let us denote by $t^\algA$ the term of
  $\algA$ we
  obtain from $t$ by replacing each symbol $w$ by $w^\algA$. In $\algA$ we thus
  have the equality (note that $y_i$'s and $z_i$'s are members of $A$ here, not
  variable symbols).
  \[
    t^\algA(y_1,\dots,y_m)=t^\algA(z_1,\dots,z_m)=r.
  \]
for some $r\in A$.

  We have $(y_i,z_i)\in R$ for all $i$ and so applying the operation $t^\algA$
  to pairs $(y_1,z_1),\dots,(y_m,z_m)\in R$ gets us $(r,r)\in R$.
\end{proof}

While we would like to show that $R$ does not intersect the
diagonal, idempotency prevents us from comfortably doing a proof by
induction on term complexity on $R$ itself. This is why we take a detour
through subterms.

\begin{definition}
  We define the relation ``to be a subterm'' on the set $A$, denoted by $\preceq$, as the
  reflexive and transitive closure of the set
  \[
    Q=\{(a,b)\colon a,b\in A,\,\exists c,d,e\in A,\,
    b=w(c,d,e),\,a\in\{c,d,e\}\}.
  \]
\end{definition}
Informally, $a\preceq b$ if in the term $b$ we can find a subterm that is identical to $a$. 
Note that $\preceq$ is defined using the
(syntactic) symbol $w$. However, it turns out that $\preceq$ behaves well with respect to
the operation $w^\algA$, too:

\begin{lemma}\label{lem:subterms}
	The following holds for $\preceq$:
  \begin{enumerate}[(a)]
    \item\label{itm:xy} If $x,y$ are distinct members of $X$
	    (i.e. variables), then $x\not\preceq y$.
    \item For all $b,c,d\in A$, we have $b\preceq w^\algA (b,c,d)$,
      $w^\algA(d,b,c)$, $w^\algA(c,d,b)$.
    \item\label{itm:wA} For all $a,b,c,d\in A$ such that
	    $a\preceq b$ we have $a\preceq w^\algA
      (b,c,d)$, $w^\algA(d,b,c)$, $w^\algA(c,d,b)$.
    \item\label{itm:eq}If $a,b,c,d\in A$ are such that $a\preceq
	    w^\algA(b,c,d)$ and $a\not\preceq b,c,d$, then $a=w^\algA(b,c,d)$.
  \end{enumerate}
\end{lemma}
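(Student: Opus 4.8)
The plan is to first extract the single structural fact about $w^\algA$ on which all four parts rest: for any $a_1,a_2,a_3\in A$, \emph{either} $a_1=a_2=a_3$, and then $w^\algA(a_1,a_2,a_3)=a_1$, \emph{or} $w^\algA(a_1,a_2,a_3)$ is a formal term of the form $w(b_1,b_2,b_3)$ with $\{b_1,b_2,b_3\}=\{a_1,a_2,a_3\}$ as sets. This is immediate from the three-case definition of $w^\algA$: in the ``pairwise distinct'' case the value is literally $w(a_1,a_2,a_3)$, and in the ``exactly two equal'' case the value $w(b,a,a)$ still lists precisely the two values $a$ and $b$ occurring among $a_1,a_2,a_3$. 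The one thing to be disciplined about here — and the only real source of error in the whole lemma — is to keep the syntactic constructor $w$, used to build the elements of $A$, strictly apart from the operation $w^\algA$; the structural fact is exactly the bridge between them.

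Granting that fact, part (a) follows straight from the definition of $\preceq$ as the reflexive--transitive closure of $Q$: a $Q$-chain from $x$ to $y$ of length at least $1$ would end with a pair whose second coordinate is $y$, forcing $y$ to be syntactically of the form $w(c,d,e)$, which a variable is not; so $x\preceq y$ forces $x=y$. Part (b) follows because, for any $a\in\{a_1,a_2,a_3\}$, the structural fact says $a$ is either equal to $w^\algA(a_1,a_2,a_3)$ (whence $a\preceq w^\algA(a_1,a_2,a_3)$ by reflexivity) or an immediate subterm of it (whence $(a,w^\algA(a_1,a_2,a_3))\in Q$, so $a\preceq w^\algA(a_1,a_2,a_3)$); since this uses only the underlying set $\{a_1,a_2,a_3\}$, it handles the three cyclic shifts $w^\algA(b,c,d)$, $w^\algA(d,b,c)$, $w^\algA(c,d,b)$ at once. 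Part (c) is then a one-line composition of (b) with transitivity of $\preceq$: $a\preceq b\preceq w^\algA(b,c,d)$, and likewise for the two shifts.

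Part (d) is the one needing a genuine, if still short, argument. Given a $Q$-chain $a=z_0,z_1,\dots,z_n=w^\algA(b,c,d)$ realizing $a\preceq w^\algA(b,c,d)$, I would show that $n=0$, which is exactly the desired conclusion $a=w^\algA(b,c,d)$. Suppose instead $n\geq 1$. Then $z_{n-1}$ is an immediate subterm of $w^\algA(b,c,d)$, so by the structural fact $w^\algA(b,c,d)$ has the form $w(b_1,b_2,b_3)$ with $\{b_1,b_2,b_3\}=\{b,c,d\}$ and $z_{n-1}\in\{b,c,d\}$; but $a=z_0\preceq z_{n-1}$ via the truncated chain, so $a\preceq b$, $a\preceq c$, or $a\preceq d$, contradicting the hypothesis. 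Hence $n=0$. (The degenerate case $b=c=d$ gives $w^\algA(b,c,d)=b$, so there the hypotheses $a\preceq w^\algA(b,c,d)$ and $a\not\preceq b$ are already contradictory and there is nothing to prove.) So the only place any real ``work'' happens is the opening case check for the structural fact; everything after that is bookkeeping with $Q$-chains, and the main obstacle is simply not conflating $w$ with $w^\algA$ along the way.
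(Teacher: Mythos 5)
Your proof is correct and takes essentially the same route as the paper's: your ``structural fact'' is precisely the case analysis on the definition of $w^\algA$ that the paper carries out inline in parts (b) and (d), and your chain arguments for (a) and (d) match the paper's (your (a) even slightly streamlines the paper's induction on the number of occurrences of $w$ by inspecting only the last link of the $Q$-chain). No gaps.
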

\begin{proof}
  \begin{enumerate}[(a)]
    \item Since $x\neq y$, the only way we could have had $x\preceq y$ would be if there
      was a chain of $k\geq 2$ terms $x=t_1,t_2,\dots,t_k=y$ such that
      $(t_i, t_{i+1})\in Q$ for $i=1,\dots,k-1$ (where $Q$ is the set
      from the definition of $\preceq$). From this we get
      that for all $i=1,\dots,k-1$ we have
      $t_{i+1}=w(p_i,q_i,r_i)$ where $p_i,q_i,r_i$ are 
      members of $A$ and $t_i$ appears at least once in
      $(p_i,q_i,r_i)$. By induction on $i$, it
      follows that each $t_i$ must have at least $i-1$ occurrences of the symbol
      $w$, so $t_k$ contains at least one symbol $w$. But the term $t_k=y$ has no
      $w$ in it, a contradiction.

    \item We will show $b\preceq w^\algA(b,c,d)$; the other two subterm
      relationships are similar. 
      
      Unless $b=c=d$, we have $w^\algA(b,c,d)\in
      \{w(b,c,d),w(d,b,c),w(c,d,b)\}$ and $b$ is a subterm of each of
      the terms on the right side, so we are done. In the case $b=c=d$, we get
      $w^\algA(b,c,d)=b$ and $b\preceq b$ follows from the reflexivity of
      $\preceq$.
    \item This follows from the transitivity of $\preceq$ and the previous
      point: We have $a\preceq b\preceq w^\algA
      (b,c,d),w^\algA(d,b,c),w^\algA(c,d,b)$.
    \item Were $b,c,d$ all equal, we would have $a\preceq w^\algA(b,b,b)=b$, a
      contradiction. with $a\not\preceq b$. Therefore, without loss of generality
      $w^\algA(b,c,d)=w(b,c,d)$ (we can reorder $b,c,d$).

      Assume for a contradiction that $a\neq w^\algA(b,c,d)=w(b,c,d)$. Since the
      relation $a\preceq w(b,c,d)$ is not a consequence of reflexivity,
      there is a $k\geq 2$ and a chain $a=t_1\preceq t_2\preceq
      \dots\preceq t_{k-1}\preceq t_k=w(b,c,d)$ witnessing $a\preceq
      w(b,c,d)$ with $(t_i,t_{i+1})\in Q$ for all $i$. But then $a\preceq t_{k-1}$ and $t_{k-1}$ needs to be
      one of $b,c,d$ by the definition of $Q$, a
      contradiction with
      $a\not\preceq b,c,d$.
  \end{enumerate}
\end{proof}

Let now $S$ be the following relation on $A$:
\[
  S=\{(a,b)\in A^2\colon a\not \preceq b \AND b\not\preceq a\}.
\]
By part~(\ref{itm:xy}) of Lemma~\ref{lem:subterms}, the generators of $R$ lie in
$S$ and since $\preceq$ is reflexive, $S$ does not intersect the
diagonal.

\begin{lemma}\label{lem:cases}
  The relation $S$ is a subuniverse of $\algA^2$.
\end{lemma}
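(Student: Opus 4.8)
The plan is to verify directly that $S$ is closed under the coordinatewise basic operation of $\algA^2$. So I would fix three pairs $(a_1,b_1),(a_2,b_2),(a_3,b_3)\in S$, set $a=w^\algA(a_1,a_2,a_3)$ and $b=w^\algA(b_1,b_2,b_3)$, and try to show $(a,b)\in S$, that is, $a\not\preceq b$ and $b\not\preceq a$. Since both the definition of $S$ and the hypothesis ``$(a_i,b_i)\in S$'' are symmetric in the two coordinates, it is enough to rule out $a\preceq b$; so I would assume $a\preceq b$ and look for a contradiction.

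The first step is a dichotomy obtained by applying part~(\ref{itm:eq}) of Lemma~\ref{lem:subterms} to $b=w^\algA(b_1,b_2,b_3)$: either $a\preceq b_j$ for some $j\in\{1,2,3\}$, or else $a\not\preceq b_1,b_2,b_3$, in which case that part of the lemma forces $a=w^\algA(b_1,b_2,b_3)=b$. In the first case, part~(b) of Lemma~\ref{lem:subterms} gives $a_j\preceq w^\algA(a_1,a_2,a_3)=a$, hence by transitivity $a_j\preceq a\preceq b_j$, contradicting $(a_j,b_j)\in S$. So the remaining case is $a=b$.

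To handle $a=b$ I would split again. If $a_1=a_2=a_3$, then $a=a_1$ and part~(b) of Lemma~\ref{lem:subterms} gives $b_1\preceq w^\algA(b_1,b_2,b_3)=b=a_1$, contradicting $(a_1,b_1)\in S$; the symmetric argument disposes of $b_1=b_2=b_3$. Otherwise neither triple is constant, so by the definition of $w^\algA$ both $a$ and $b$ have $w$ as their outermost symbol, and the arguments occurring in the (unique) normal form of $a$ form exactly the set $\{a_1,a_2,a_3\}$, those of $b$ the set $\{b_1,b_2,b_3\}$. From $a=b$ we get $\{a_1,a_2,a_3\}=\{b_1,b_2,b_3\}$ as sets, and then a short finite check finishes the job: a $w$-headed element of $A$ is $w^\algA$ of at most three non-constant argument triples (only $(c,d,e)$ itself when $c,d,e$ are distinct, or the three triples $(q,p,p),(p,q,p),(p,p,q)$ when exactly two coordinates of the normal form coincide), and any two distinct such triples agree in at least one coordinate; hence $a_i=b_i$ for some $i$, once more contradicting $(a_i,b_i)\in S$.

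I expect the bookkeeping in this last sub-case to be the only delicate part of the proof: since $w^\algA$ and the formal symbol $w$ disagree precisely when two arguments coincide, the equation $a=b$ must be unwound through normal forms rather than read off directly from the triples $(a_1,a_2,a_3)$ and $(b_1,b_2,b_3)$. Everything else follows immediately from Lemma~\ref{lem:subterms} and the reflexivity and transitivity of $\preceq$.
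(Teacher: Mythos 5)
Your proof is correct and follows essentially the same route as the paper's: reduce to $a\preceq b$, use Lemma~\ref{lem:subterms}(\ref{itm:eq}) to split into ``$a\preceq b_j$ for some $j$'' versus ``$a=b$'', and then analyze $a=b$ according to whether the triples are constant. The only (harmless) divergence is in the final sub-case, where the paper uses a pigeonhole count on the occurrences of the non-repeated argument while you enumerate the possible $w^\algA$-preimage triples of the common normal form and note that any two of them agree in a coordinate; both yield the required index $i$ with $a_i=b_i$.
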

\begin{proof}
  Let us take $(a_1,b_1),(a_2,b_2),(a_3,b_3)\in S$ such
  that (without loss of generality) 
  $w^\algA(a_1,a_2,a_3)\preceq w^\algA(b_1,b_2,b_3)$.
  
  We consider several cases:
  \begin{enumerate}[(a)]
    \item Assume that $w^\algA(a_1,a_2,a_3)\neq w^\algA(b_1,b_2,b_3)$. Then by
      part~(\ref{itm:eq}) of Lemma~\ref{lem:subterms},
      $w^\algA(a_1,a_2,a_3)$ (which plays the role of $a$ in the
      Lemma) needs to be a subterm of one of $b_1,b_2,b_3$.
      Without loss of generality assume $w^\algA(a_1,a_2,a_3)\preceq b_1$.
      But $a_1\preceq w^\algA(a_1,a_2,a_3)$ by
      part~(\ref{itm:wA}) of
      Lemma~\ref{lem:subterms}. We have $a_1\preceq
      w^\algA(a_1,a_2,a_2)\preceq b_1$, which is a contradiction with $(a_1,b_1)\in S$.
    \item Assume that $w^\algA(a_1,a_2,a_3)= w^\algA(b_1,b_2,b_3)$ and
      $a_1=a_2=a_3$. Then $b_1\preceq
      w^\algA(b_1,b_2,b_3)=w^\algA(a_1,a_1,a_1)=a_1$ (where the subterm
      relationship follows again by part~(\ref{itm:wA}) of
      Lemma~\ref{lem:subterms}) and therefore $b_1\preceq a_1$.

      The same argument takes care of the case $b_1=b_2=b_3$.
    \item Assume that $w^\algA(a_1,a_2,a_3)= w^\algA(b_1,b_2,b_3)$ and
      $\{a_1,a_2,a_3\}=\{c,d\}$ with $d$ appearing twice, i.e. $w^\algA(a_1,a_2,a_3)= w(c,d,d)= w^\algA(b_1,b_2,b_3)$
      
      Since $b_1,b_2,b_3$ are not all equal, by the definition of $w^\algA$
      we must have $\{b_1,b_2,b_3\}=\{c,d\}$ with $d$ appearing twice. Since we
      have three pairs $(a_1,b_1)$, $(a_2,b_2)$, $(a_3,b_3)$ and only two
      appearances of $c$ (one for $a$'s, one for $b$'s), it follows that
      there exists an $i$ such that $a_i=d=b_i$. However, $(d,d)\not \in S$, 
      a contradiction.
    \item Assume that $w^\algA(a_1,a_2,a_3)= w^\algA(b_1,b_2,b_3)$ and
      $a_1,a_2,a_3$ are pairwise different, i,e. $w(a_1,a_2,a_3)=
      w^\algA(b_1,b_2,b_3)$. Since $b_1,b_2,b_3$ are not all equal, the only
      way to get equality here is to have $a_i=b_i$ for all $i=1,2,3$, a
      contradiction with $(a_i,b_i)\in S$.
  \end{enumerate}
\end{proof}

\begin{proof}[Proof of Theorem~\ref{thm:main}]
  By Lemma~\ref{lem:cases}, we get that $S$ is a subuniverse of
$\algA^2$ that contains all generators of $R$ and thus $R\subset S$. As we have
seen above, $S$ is disjoint from the diagonal, so $R$ must be disjoint from the
diagonal. Therefore, by Observation~\ref{obs:SLEMC}, the variety $V$ can't
satisfy a nontrivial SLEMC.
\end{proof}

\section{$k$-wnu implies only trivial SLEMCs}
\begin{theorem}
  For any $k\geq 3$ the $k$-wnu identities don't imply a nontrivial SLEMC.
\end{theorem}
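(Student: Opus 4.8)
The plan is to rerun the proof of Theorem~\ref{thm:main} with $w$ now $k$-ary, the only genuine change being that the notion of normal form must accommodate the richer family of argument-multiplicity patterns that appear when $k\geq 4$. Fix $k\geq 3$ and let $V_k$ be the variety with one $k$-ary basic operation $w$ whose equational theory is generated by the $k$-wnu identities. Declare a term over $X$ and $w$ to be a \emph{normal form} if it is a variable, or it has the shape $w(a_1,\dots,a_k)$ with every $a_i$ a normal form, the $a_i$ not all equal, and, in the unique situation in which a $k$-wnu identity could still be applied (namely when exactly one argument differs from a value shared by the other $k-1$), with that distinguished argument sitting in position $1$. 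As in the $k=3$ case, a straightforward induction on the number of occurrences of $w$ shows that every term rewrites to a normal form using the $k$-wnu identities.

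To get uniqueness of the normal form I would introduce the algebra $\algA=(A,w^\algA)$ on the set $A$ of normal forms, where $w^\algA(a_1,\dots,a_k)$ is: $a_1$ when all the $a_i$ coincide; the formal term $w(b,a,\dots,a)$ when exactly one value $b$ occurs once among the $a_i$ and another value $a$ occurs $k-1$ times; and the formal term $w(a_1,\dots,a_k)$ in every remaining case (these are exactly the cases in which $\{a_1,\dots,a_k\}$ contains at least three distinct values, or two distinct values neither of which occurs exactly once, the latter being possible only for $k\geq 4$). One checks directly that $w^\algA$ is a $k$-wnu operation, and then the argument of Observation~\ref{obs:rewrite} goes through: evaluating $t^\algA(\overline x)$ in $\algA$ returns $t$ for every normal form $t$. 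Hence each term has a unique normal form and $\algA$ is the free $X$-generated algebra in $V_k$. By Observation~\ref{obs:form} and the obvious analogue of Observation~\ref{obs:SLEMC}, it then suffices to show that the subalgebra $R$ of $\algA^2$ generated by $\{(x,y)\colon x,y\in X,\ x\neq y\}$ does not meet the diagonal.

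For that I reintroduce the subterm relation $\preceq$ on $A$ (the reflexive, transitive closure of ``is an immediate argument of'') and check that Lemma~\ref{lem:subterms} carries over with the same proofs; the one point worth noting is that for non-constant inputs $w^\algA(c_1,\dots,c_k)$ always equals some $w(e_1,\dots,e_k)$ with $\{e_1,\dots,e_k\}=\{c_1,\dots,c_k\}$ as multisets, so ``is an argument of'' still implies $\preceq$ and part~(\ref{itm:eq}) still applies. Then $S=\{(a,b)\colon a\not\preceq b \AND b\not\preceq a\}$ contains the generators of $R$, misses the diagonal, and, the heart of the matter, is a subuniverse of $\algA^2$. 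The case analysis of Lemma~\ref{lem:cases} is unchanged except in the subcase $w^\algA(\overline a)=w^\algA(\overline b)$ with neither $\overline a$ nor $\overline b$ constant, where one splits on the multiset pattern of the (necessarily common) tuples of arguments: if that pattern is not ``one value once, one value $k-1$ times'' then $w^\algA$ performs no reordering, forcing $\overline a=\overline b$ coordinatewise and hence $a_1=b_1$, contradicting $(a_1,b_1)\in S$; and if the pattern is ``$c$ once, $d$ $(k-1)$ times'' then the unique $c$ sits in some position $p$ of $\overline a$ and some position $q$ of $\overline b$, and either $p=q$ (again $\overline a=\overline b$) or $p\neq q$, in which case $k\geq 3$ provides a coordinate $i\notin\{p,q\}$ with $a_i=d=b_i$, contradicting $(a_i,b_i)\in S$. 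The strict-subterm case and the cases where one of $\overline a,\overline b$ is constant are handled exactly as in the proof of Lemma~\ref{lem:cases}.

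The fiddly (but routine) part is the bookkeeping around multiplicity patterns: for $k\geq 4$ one must keep track of the ``inert'' tuples, such as two values each of multiplicity $2$ when $k=4$, which belong to $A$ and on which $w^\algA$ acts as the formal $w$, causing no reordering. The only genuinely essential use of the hypothesis $k\geq 3$ is the step highlighted above, the existence of a coordinate outside $\{p,q\}$, and this is precisely what fails for $k=2$, where the $2$-wnu identity $w(x,y)\approx w(y,x)$ is itself a nontrivial SLEMC. Granting the subuniverse claim, $R\subseteq S$ is disjoint from the diagonal, so by the analogue of Observation~\ref{obs:SLEMC} the variety $V_k$ satisfies no nontrivial SLEMC, proving the theorem.
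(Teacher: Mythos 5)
Your proposal is correct and follows essentially the same route as the paper: the same normal forms (odd argument forced into position $1$), the same free algebra $\algA$ with the same $w^\algA$, the same subterm relation $\preceq$ and relation $S$, and the same reduction via Observations~\ref{obs:form} and~\ref{obs:SLEMC}. In fact you supply more detail than the paper's sketch does, correctly working out the extra multiset patterns for $k\geq 4$ and the counting step (a coordinate outside the two positions of the singleton value) that the paper leaves to the reader with ``as before.''
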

\begin{proof}
  The proof is very similar to the proof of
  Theorem~\ref{thm:main}, so we only sketch the main points here. 
  
  We take $V$ to be the variety defined by the $k$-wnu
equations for a $k$-wnu operation $w$, $X$ a countable set of variables and $A$ the smallest
  set of terms made from $X$ and $w$ such that $X\subset A$ and
  $w(a_1,\dots,a_k)\in A$ if and only if $a_1,\dots,a_k\in A$ and there are at
  least two
	  distinct indices $i,j$ such that $a_1\neq a_i,a_j$.

  Again, we consider the algebra $\algA=(A,w^\algA)\in V$ with $w^\algA(a_1,\dots,a_k)$ defined as
\[
  w^\algA(a_1,\dots,a_k)=\begin{cases}
    a_1& a_1=a_2=\dots=a_k\\
    w(c,d,d,\dots,d)& \exists i\in\{1,\dots,k\}, a_i=c,\\ 
    &a_1=a_2=\dots=a_{i-1}=a_{i+1}=\cdots\\
    &\cdots=a_k=d\neq c\\
    w(a_1, a_2,\dots,a_k)&\text{otherwise.}
  \end{cases}
\]
The relation $R$ is again generated in $\algA^2$ by
$\{(x,y)\in X\colon x\neq y\}$, while the subterm relation $\preceq$
is defined as the reflexive and transitive closure of
  \[
    \{(a,b)\colon a,b\in A,\,\exists c_1,c_2,\dots,c_k\in A,\,
    b=w(c_1,\dots,c_k),\,a\in\{c_1,\dots,c_k\}\}.
  \]
As before, we show that 
\[
  S=\{(a,b)\in A^2\colon a\not \preceq b \AND b\not\preceq a\}
\]
is $\algA$-invariant and thus prove that $R$ does not intersect
the diagonal which implies that $V$ satisfies no nontrivial SLEMC.
\end{proof}

\bibliographystyle{plain}

\bibliography{citations}

\begin{thebibliography}{1}

\bibitem{term-rewriting}
Franz Baader and Tobias Nipkow.
\newblock {\em Term Rewriting and All That}.
\newblock Cambridge University Press, New York, 1999.

\bibitem{barto-kozik-cyclic-terms-and-csp}
Libor Barto and Marcin Kozik.
\newblock Absorbing subalgebras, cyclic terms, and the constraint satisfaction
  problem.
\newblock {\em Logical Methods in Computer Science}, 8(1), 2012.

\bibitem{uabook}
Clifford Bergman.
\newblock {\em Universal Algebra: Fundamentals and Selected Topics}.
\newblock Chapman \& Hall/CRC Press, Boca Raton and New York and Abingdon, 1st
  edition, 2011.

\bibitem{kearnes-markovic-mckenzie-omit-1}
Keith Kearnes, Petar Markovi{\'{c}}, and Ralph McKenzie.
\newblock Optimal strong {M}al'cev conditions for omitting type 1 in locally
  finite varieties.
\newblock {\em Algebra universalis}, 72(1):91--100, 2014.

\bibitem{maroti-mckenzie-wnu}
Mikl\'os Mar\'oti and Ralph McKenzie.
\newblock Existence theorems for weakly symmetric operations.
\newblock {\em Algebra Universalis}, 59:463--489, 2008.

\bibitem{mirek-omit-1}
Miroslav Olšák.
\newblock The weakest nontrivial idempotent equations.
\newblock {\em Bulletin of the London Mathematical Society}, 49(6):1028--1047,
  2017.

\bibitem{siggers-original}
Mark~H. Siggers.
\newblock A strong {M}al'cev condition for locally finite varieties omitting
  the unary type.
\newblock {\em Algebra universalis}, 64(1):15--20, 2010.

\bibitem{taylor-homotopy}
Walter Taylor.
\newblock Varieties obeying homotopy laws.
\newblock {\em Canadian Journal of Mathematics}, 29:498--527, 1977.

\end{thebibliography}
\end{document}